\newtheorem{theorem}{Theorem}[section]
\newtheorem{lemma}[theorem]{Lemma}
\theoremstyle{definition}
\newtheorem{definition}[theorem]{Definition}
\theoremstyle{proposition}
\newtheorem{proposition}[theorem]{Proposition}
\theoremstyle{corollary}
\theoremstyle{remark}
\newtheorem{remark}[theorem]{Remark}
\numberwithin{equation}{section}
\begin{document}

\title[Global Regularity for 2-D
Boussinesq Equations]{Global Regularity of the Two-dimensional Boussinesq Equations Without
Diffusivity in Bounded Domains}

\author{Daoguo Zhou}

\address{School of Mathematics and Information Science, Henan Polytechnic University,
Jiaozuo, Henan 454000, China}

\email{daoguozhou@hpu.edu.cn}

\subjclass[2010]{35Q35, 76D05}

\keywords{Global regularity, Boussinesq equations, Zero diffusivity, Bounded
domain}
\begin{abstract}
We address the well-posedness for the two-dimensional Boussinesq equations
with zero diffusivity in bounded domains. We prove global in time
regularity for rough initial data: the initial velocity 
has  $\epsilon$ fractional derivatives in $L^{q}$ and the initial temperature is in $L^{q}$, for some
$q>2$ and $\epsilon>0$ arbitrarily small. 
\end{abstract}
\maketitle

\section{Introduction and Main Results}

In this paper, we study the initial-boundary value problem for the
2D Boussinesq equations with zero thermal diffusivity on an open bounded
domain $\Omega\subset\mathbb{R}^{2}$ with smooth boundary $\partial\Omega$.
The corresponding equations reads
\begin{equation}
\begin{cases}
\partial_{t}u+u\cdot\nabla u-\nu\Delta u+\nabla p=\theta e_{2},\\
\partial_{t}\theta+u\cdot\nabla\theta=0,\\
\nabla\cdot u=0,
\end{cases}
\end{equation}
where $u=(u_{1},u_{2})$ is the velocity vector field, $p$ is the
pressure, $\theta$ is the temperature, $\nu>0$ is the constant viscosity,
and $e_{2}=(0,1)$. This system is supplemented by the following initial
and boundary conditions 
\begin{equation}
\begin{cases}
(u,\theta)(x,0)=(u_{0},\theta_{0})(x),\;x\in\Omega,\\
u(x,t)|_{\partial\Omega}=0.
\end{cases}
\end{equation}
Here, we have imposed the mostly used no-slip conditions on the velocity,
which assume that fluid particles are adherent to the boundary due
to the positive viscosity.

The Boussinesq equations play an important role in modeling large scale atmospheric
and oceanic flows \cite{Ma}, \cite{Pe}. In addition, the Boussinesq
equations is closely related to Rayleigh-Benard convection \cite{Pe}.
From the mathematical view, the 2D Boussinesq equations serve as a
simplified model of the 3D Euler and Navier-Stokes equations. In fact,
we get the 2D Boussinesq equations when we analyze 3D axisymmetric
swirling fluid in the Navier-Stokes framework. Better understanding of
the 2D Boussinesq equations will undoubtedly shed light on the understanding
of 3D flows \cite{Ma2}.

Recently, the well-posedness of the 2D Boussinesq equations
has attracted attention of many mathematicians, see \cite{Abidi}-\cite{Chae},
\cite{DP}-\cite{Hu2}, \cite{Lai}, \cite{Li}, \cite{Titi}, \cite{Miao}, \cite{Sun},
\cite{Wang}. In particular, when $\Omega=\mathbb{R}^{2}$, the Cauchy
problem of (1.1) has been well studied. Hou and Li \cite{Hou} and
Chae \cite{Chae} showed the global in time regularity for $(u_{0},\theta_{0})\in H^{3}(\mathbb{R}^{2})\times H^{2}(\mathbb{R}^{2})$.
Kukavica, Wang, Ziane \cite{Ku} obtained  the global regularity for $(u_{0},\theta_{0})\in W^{1+s,q}(\mathbb{R}^{2})\times W^{s,q}(\mathbb{R}^{2})$ for $s\in (0,1)$, $q\in[2,\infty)$ and $sq>2$. They also pointed out that the restriction $sq>2$ can be removed provided that the initial data have compact support or $\Omega=\mathbb{T}^2$.
Abidi and Hmidi \cite{Abidi} proved the global existence for $(u_{0},\theta_{0})\in L^{2}(\mathbb{R}^{2})\cap B_{\infty,1}^{-1}(\mathbb{R}^{2})\times L^{2}(\mathbb{R}^{2})$.
Danchin and Paicu \cite{DP} proved the uniqueness of weak solution
for $(u_{0},\theta_{0})\in L^{2}(\mathbb{R}^{2})\times L^{2}(\mathbb{R}^{2})$.

In real world applications, fluids often move in bounded domains,
where new phenomena such as the creation of vorticity on the boundary
appears. In such case, the boundary effect requires a careful analysis.
The initial-boundary value problem of (1.1)-(1.2) was first studied
by Lai, Pan, and Zhao \cite{Lai}, who showed the global regularity
for $(u_{0},\theta_{0})\in H^{3}(\Omega)\times H^{2}(\Omega)$. Later,
Hu, Kukavica and Ziane \cite{Hu1} proved the global existence for
initial data $(u_{0},\theta_{0})\in H^{2}(\Omega)\times H^{1}(\Omega)$.
Recently, He \cite{He} established the uniqueness of weak solution
in the energy space $L^{2}(\Omega)\times L^{2}(\Omega)$.

In the current paper, we study further the initial-boundary value
problem of system (1.1)-(1.2). We improve the previous results to the case of rough
initial data $(u_{0},\theta_{0})\in D_{A_{q}}^{1-\frac{1}{p},p}(\Omega)\times W^{s,q}(\Omega)$.
Here, $D_{A_{q}}^{1-\frac{1}{p},p}$ denotes some fractional domain
of the Stokes operator whose elements have $2-\frac{2}{p}$ derivatives
in $L^{q}$ (see Section 2 for the definition).

Before stating our main results, we define the function spaces in
which existence is going to be shown.
 \begin{definition} 
 	For all
$T>0$, $s\geq 0$ and $1<p,q<\infty$, we denote $M_{T}^{p,q,s}$ the set of
triples $(u,p,\theta)$ such that 
\begin{gather*}
u\in C([0,T];D_{A_{q}}^{1-\frac{1}{p},p})\cap L^{p}(0,T;W^{2,q}\cap W_{0}^{1,q})\,,\partial_{t}u\in L^{p}(0,T;L^{q}),\\
p\in L^{p}(0,T;W^{1,q})\text{ and }\int_{\Omega}pdx=0,\\
\theta\in C([0,T];W^{s,q}),\,\partial_{t}\theta \in L^{p}(0,T;W^{-1,q}).
\end{gather*}
The corresponding norm is denoted by $\|\cdot\|_{M_{T}^{p,q,s}}$.
 \end{definition}
Our main results read as follows. 
\begin{theorem}\label{mainthm}
Let $\Omega$ be a bounded domain in $\mathbb{R}^{2}$ with $C^{2+\epsilon}$
boundary. Let  $p\in(1,\infty)$, $q\in(2,\infty)$ and
$s\in[0,1]$. Let $u_{0}\in D_{A_{q}}^{1-\frac{1}{p},p}$and
$\theta_{0}\in W^{s,q}$. Then system (1.1)-(1.2) has a unique global solution which
belongs to $M_{T}^{p,q,s}$ for all $T>0$. 
\end{theorem}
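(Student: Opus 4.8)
The plan is to combine the maximal $L^p$--$L^q$ regularity of the Stokes operator $A_q$ with the transport structure of the temperature equation: first construct a unique local solution by a fixed-point argument, then show that its $M_T^{p,q,s}$ norm cannot blow up on any finite interval.

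First I would set up local well-posedness. Writing the momentum equation abstractly as $\partial_t u + A_q u = P(\theta e_2 - u\cdot\nabla u)$, where $P$ is the Leray projection, I would define a map on a ball in $M_T^{p,q,s}$ that sends $(u,\theta)$ to the pair obtained by solving the Stokes problem for a new velocity via the maximal regularity estimate
\[
\|v\|_{L^p(0,T;W^{2,q})}+\|\partial_t v\|_{L^p(0,T;L^q)}\le C\big(\|u_0\|_{D_{A_{q}}^{1-\frac{1}{p},p}}+\|f\|_{L^p(0,T;L^q)}\big),
\]
and solving the transport equation $\partial_t\theta+u\cdot\nabla\theta=0$ for a new temperature. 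The crucial embedding is $W^{2,q}(\Omega)\hookrightarrow W^{1,\infty}(\Omega)$, valid in two dimensions precisely because $q>2$; it gives $\nabla u\in L^p(0,T;L^\infty)\subset L^1(0,T;L^\infty)$, so the flow map of $u$ is bi-Lipschitz and the transport equation propagates the $W^{s,q}$ regularity of $\theta$ for every $s\in[0,1]$. For $T$ small the convection and buoyancy terms render the map a contraction, giving a unique local solution; uniqueness on the whole interval then follows by estimating the difference of two solutions in the energy norm and absorbing the coupling terms using the $L^\infty$ bound on $u$ and the transport bound on $\theta$.

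Next I would derive the global a priori bounds in three stages. Testing the momentum equation against $u$ yields $\tfrac12\tfrac{d}{dt}\|u\|_{L^2}^2+\nu\|\nabla u\|_{L^2}^2=\int_\Omega\theta u_2$; since $\theta$ is transported by a divergence-free field vanishing on $\partial\Omega$, all of its $L^r$ norms are conserved, in particular $\|\theta(t)\|_{L^2}=\|\theta_0\|_{L^2}$, and Gronwall gives $u\in L^\infty(0,T;L^2)\cap L^2(0,T;H^1)$ on every finite interval together with $\|\theta(t)\|_{L^q}=\|\theta_0\|_{L^q}$. The decisive observation is that the buoyancy forcing $P(\theta e_2)$ enters the maximal regularity estimate only through $\|\theta\|_{L^p(0,T;L^q)}$, which is already controlled, so the full $W^{2,q}$ bound for $u$ reduces to taming the convection term. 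Using H\"older together with the two-dimensional Gagliardo--Nirenberg inequalities I would estimate
\[
\|u\cdot\nabla u\|_{L^q}\le\|u\|_{L^{2q}}\|\nabla u\|_{L^{2q}}\le C\,\|u\|_{L^2}^{\alpha}\|\nabla u\|_{L^2}^{\beta}\,\|u\|_{W^{2,q}}^{1-\gamma}
\]
with $0<1-\gamma<1$, so that after raising to the $p$-th power and applying Young's inequality the factor $\|u\|_{W^{2,q}}$ is absorbed into the left-hand side; the remaining inhomogeneity is a function of the energy quantities $\|u\|_{L^2}$ and $\|\nabla u\|_{L^2}$, which is integrable in time thanks to the second stage. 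This closes an a priori bound for $\|u\|_{L^p(0,T;W^{2,q})}$, and hence for $\nabla u$ in $L^1(0,T;L^\infty)$, on arbitrary finite intervals. Feeding this back into the transport equation gives $\|\theta(t)\|_{W^{s,q}}\le\|\theta_0\|_{W^{s,q}}\exp\!\big(C\!\int_0^t\|\nabla u\|_{L^\infty}\big)$, so the entire $M_T^{p,q,s}$ norm is finite on every $[0,T]$, and the local solution continues globally.

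The main obstacle is the $W^{2,q}$ bootstrap in the third stage. Because the temperature equation carries no smoothing, every bit of regularity of $\theta$ must be extracted from the transport structure, which forces us to control $\nabla u$ in $L^1(0,T;L^\infty)$ and hence to prove the full $W^{2,q}$ estimate for $u$; thus the convection term has to be mastered \emph{globally} rather than only for short time, which is exactly what the interpolation-and-absorption step accomplishes. Verifying that the exponents $\alpha,\beta,\gamma$ produced by Gagliardo--Nirenberg leave a temporally integrable inhomogeneity---so that the absorption genuinely closes the estimate for all $p\in(1,\infty)$---is the delicate point where the two-dimensional geometry is essential.
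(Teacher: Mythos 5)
There is a genuine gap in your third stage: the interpolation-and-absorption step cannot be closed using only the basic energy estimate $u\in L^{\infty}(0,T;L^{2})\cap L^{2}(0,T;H^{1})$. Whatever Gagliardo--Nirenberg exponents you choose in
\[
\|u\cdot\nabla u\|_{L^{q}}\le C\,\|u\|_{L^{2}}^{\theta_{1}}\|\nabla u\|_{L^{2}}^{\theta_{2}}\|u\|_{W^{2,q}}^{\theta_{3}},
\]
two-dimensional scaling forces (at leading order) $\theta_{1}+\theta_{2}+\theta_{3}=2$ and $-\theta_{1}+(2-\tfrac{2}{q})\theta_{3}=1-\tfrac{2}{q}$, whence $\theta_{2}/(1-\theta_{3})=3-\tfrac{2}{q}>2$ for every $q>2$, independently of the choice of $\theta_{3}$. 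After Young's inequality the inhomogeneity therefore contains $\int_{0}^{T}\|\nabla u\|_{L^{2}}^{p(3-2/q)}\,dt$ with $p(3-2/q)>2$, which the energy estimate (giving only $\nabla u\in L^{2}(0,T;L^{2})$) does not control. So the claim that the remaining inhomogeneity ``is a function of the energy quantities $\|u\|_{L^2}$ and $\|\nabla u\|_{L^2}$, which is integrable in time'' is false, and the $W^{2,q}$ bound does not close as written; the ``delicate point'' you flag at the end is in fact an obstruction.

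The missing ingredient is a global $H^{1}$ estimate for the velocity, which is precisely what the paper inserts between the energy estimate and the maximal-regularity bootstrap: testing the momentum equation with $-\mathbb{P}\Delta u$, using Ladyzhenskaya's inequality $\|f\|_{L^{4}}\le\sqrt{2}\|f\|_{L^{2}}^{1/2}\|\nabla f\|_{L^{2}}^{1/2}$ and Gronwall yields $u\in L^{\infty}(0,T;H^{1})\cap L^{2}(0,T;H^{2})$, hence $u\in L^{\infty}(0,T;L^{q})$ for all $q<\infty$. With that extra input the convection term is handled via $\|u\cdot\nabla u\|_{L^{q}}\le\|u\|_{L^{q}}\|\nabla u\|_{L^{\infty}}$ and $\|\nabla u\|_{L^{\infty}}\le C\|\nabla^{2}u\|_{L^{q}}^{\alpha}\|u\|_{L^{q}}^{1-\alpha}+C\|u\|_{L^{q}}$ with $\alpha=\tfrac12+\tfrac1q<1$, and the absorption closes because the prefactor $\|u\|_{L^{\infty}(0,T;L^{q})}$ is already bounded. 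Note also that for $1<p<2$ the data $u_{0}\in D_{A_{q}}^{1-\frac{1}{p},p}$ need not lie in $H^{1}$, so this $H^{1}$ step cannot be launched at $t=0$; the paper circumvents this with a continuation argument (run the local solution to some $t_{0}>0$ where $u(t_{0})\in W^{2,q}\cap L^{q}_{\sigma}\hookrightarrow H^{1}$, then apply the global lemma from $t_{0}$), a point your proposal would also need. Your local-existence and transport-propagation parts are fine and essentially match the paper's.
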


As a byproduct of the proof of Theorem \ref{mainthm}, we get the
global regularity for $(u_{0},\theta_{0})\in H^{1}(\Omega)\times H^{1}(\Omega)$.

\begin{proposition}\label{proposition} Let $\Omega$ be a bounded
domain in $\mathbb{R}^{2}$ with $C^{2+\epsilon}$ boundary. Assume
that $u_{0}\in H^{1}$, $\nabla\cdot u_{0}=0$ and $\theta_{0}\in H^{1}$.
Then there exists a unique global solution to system (1.1)-(1.2) such that
for all $T>0$ 
\[
u\in L^{\infty}(0,T;H^{1})\cap L^{2}(0,T;H^{2}),\;\nabla u\in L^{1}(0,T;L^{\infty}),
\]
\[
\theta\in L^{\infty}(0,T;H^{1}).
\]
\end{proposition}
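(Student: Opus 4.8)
The plan is to deduce Proposition \ref{proposition} from Theorem \ref{mainthm} together with a pair of energy-type estimates. First I would fit the $H^{1}\times H^{1}$ data into the framework of the main theorem. Fix $q\in(2,\infty)$ close to $2$ and $p\in(1,2)$ with $\frac{1}{p}+\frac{1}{q}>1$. Since $u_{0}\in H^{1}$ with $\nabla\cdot u_{0}=0$, Lemma \ref{imbedding} gives $u_{0}\in D_{A_{q}}^{1-\frac{1}{p},p}$, while $\theta_{0}\in H^{1}\hookrightarrow L^{q}=W^{0,q}$ in two dimensions for every $q<\infty$. Applying Theorem \ref{mainthm} with $s=0$ therefore produces a unique global solution $(u,p,\theta)\in M_{T}^{p,q,0}$ for every $T>0$; in particular $u\in L^{p}(0,T;W^{2,q}\cap W_{0}^{1,q})$, and uniqueness guarantees that the energy estimates performed below (which are justified for a.e.\ $t$ by this regularity) apply to it.

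The crucial consequence is the bound $\nabla u\in L^{1}(0,T;L^{\infty})$. Because $q>2$, the Morrey embedding $W^{2,q}(\Omega)\hookrightarrow W^{1,\infty}(\Omega)$ holds in dimension two, so $\nabla u\in L^{p}(0,T;L^{\infty})$, and since $p>1$ and the time interval is finite this embeds into $L^{1}(0,T;L^{\infty})$. I regard this step as the main obstacle: in two dimensions $H^{2}$ just fails to embed into $W^{1,\infty}$, so the $L^{2}(0,T;H^{2})$ control obtained from energy estimates alone is not enough, and it is precisely the maximal-regularity gain to $W^{2,q}$ with $q>2$ that upgrades the borderline Sobolev embedding into an honest $L^{\infty}$ bound on $\nabla u$.

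It remains to identify the energy spaces. The $L^{2}$ estimate, obtained by pairing the momentum equation with $u$ and using that $\|\theta(t)\|_{L^{2}}=\|\theta_{0}\|_{L^{2}}$ is conserved along the transport (as $\nabla\cdot u=0$ and $u|_{\partial\Omega}=0$), yields $u\in L^{\infty}(0,T;L^{2})\cap L^{2}(0,T;H^{1})$. Pairing next with the Stokes operator $Au$ and using $\|A^{1/2}u\|_{L^{2}}=\|\nabla u\|_{L^{2}}$, the two-dimensional Ladyzhenskaya and Young inequalities give
\[
\frac{d}{dt}\|\nabla u\|_{L^{2}}^{2}+\nu\|Au\|_{L^{2}}^{2}\le C\|u\|_{L^{2}}^{2}\|\nabla u\|_{L^{2}}^{4}+C\|\theta\|_{L^{2}}^{2}.
\]
Writing the first term on the right as $\bigl(C\|u\|_{L^{2}}^{2}\|\nabla u\|_{L^{2}}^{2}\bigr)\|\nabla u\|_{L^{2}}^{2}$ and observing that $\|u\|_{L^{2}}^{2}$ is bounded while $\int_{0}^{T}\|\nabla u\|_{L^{2}}^{2}\,dt<\infty$ from the previous step, Gronwall's inequality bounds $\|\nabla u(t)\|_{L^{2}}$ on $[0,T]$ and, after integration of the dissipation term, yields $u\in L^{\infty}(0,T;H^{1})\cap L^{2}(0,T;H^{2})$.

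Finally I would propagate the $H^{1}$ regularity of the temperature. Differentiating the transport equation $\partial_{t}\theta+u\cdot\nabla\theta=0$, pairing with $\nabla\theta$ and integrating by parts gives
\[
\frac{d}{dt}\|\nabla\theta\|_{L^{2}}^{2}\le C\|\nabla u\|_{L^{\infty}}\|\nabla\theta\|_{L^{2}}^{2}.
\]
Since $\nabla u\in L^{1}(0,T;L^{\infty})$ by the second step, Gronwall's inequality yields $\|\nabla\theta(t)\|_{L^{2}}^{2}\le\|\nabla\theta_{0}\|_{L^{2}}^{2}\exp\bigl(C\int_{0}^{T}\|\nabla u\|_{L^{\infty}}\,dt\bigr)$, hence $\theta\in L^{\infty}(0,T;H^{1})$. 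Uniqueness is inherited from Theorem \ref{mainthm}, completing the proof.
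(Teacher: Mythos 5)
Your proposal is correct and follows essentially the same route as the paper: the embedding $H^{1}\cap L_{\sigma}^{2}\hookrightarrow D_{A_{q}}^{1-\frac{1}{p},p}$ for $\frac{1}{p}+\frac{1}{q}>1$, the $L^{2}$ and $H^{1}$ energy estimates via Ladyzhenskaya and Gronwall, the maximal-regularity gain to $L^{p}(0,T;W^{2,q})$ with $q>2$ yielding $\nabla u\in L^{1}(0,T;L^{\infty})$, and the transport propagation of $H^{1}$ regularity for $\theta$. The only difference is organizational: the paper simply cites Lemma \ref{mainlemma} and Lemma \ref{tran}, whose proofs contain exactly the estimates you re-derive after invoking Theorem \ref{mainthm}.
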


\begin{remark}
While preparing the manuscript, the author becomes to know that Proposition
\ref{proposition} was obtained very recently by Ju \cite{Ju} independently.
However, our method is completely different from that of Ju, which
exploited Brezis-Gallouet type inequalities and spectral decomposition.
\end{remark}

\begin{remark}The proof of Theorem \ref{mainthm}
	is based on the maximal regularity of the Stokes operator and some
	interpolation inequalities. Our method is elementary and can be carried
	over to the whole space case $\mathbb{R}^{2}$ without difficulty.
\end{remark}

\begin{remark}
 Since the ``regularity index'' $2-2/p$ in Theorem \ref{mainthm} can be arbitrarily close to zero, our result improves
the previous works of Lai et al. \cite{Lai},  Hu et al. \cite{Hu1} and Kukavica  et al. \cite{Ku} by requiring much less regularity for the initial data. 
\end{remark}

The rest of this paper is organized as follows. In Section 2, we recall
the maximal regularity of Stokes equations as well as some elementary
inequalities. In Section 3, we present the detailed proofs of the
main results.

\section{preliminaries}

\noindent \textbf{Notations:}

(1) Let $\Omega$ be a bounded domain in $\mathbb{R}^{2}$. For $1<q<\infty$,
denote by $L_{\sigma}^{q}(\Omega)$ the completion in $L^{q}(\Omega)$
of the set of solenoidal vector-fields with coefficients in $C_{0}^{\infty}(\Omega)$.
If $k$ is an integer, we denote by $W^{k,q}(\Omega)$ the set of
$L^{q}(\Omega)$ functions whose derivatives up to order $k$ belong
to $L^{q}(\Omega)$. For $s\in (0,1)$, the Sobolev space $W^{s,q}(\Omega)$ is defined as
\[W^{s,q}(\Omega)=\left\{f\mid \|f\|_{W^{s,q}}=\left(\int_{\Omega}\int_{\Omega}\frac{|f(x)-f(y)|^q}{|x-y|^{sq+2}}dxdy\right)^{\frac{1}{q}}<\infty\right\}.\]

(2) For $\alpha\in(0,2)$ and $1<p,q<\infty$, denote by $B_{q,p}^{\alpha}$
the Besov space which is defined as the real interpolation space between
$L^{q}(\Omega)$ and $W^{m,q}(\Omega)$ ($m>\alpha$):
\[
B_{q,r}^{\alpha}(\Omega)=(L^{q}(\Omega),W^{m,q}(\Omega))_{\frac{\alpha}{m},r}.
\]
Denote by $\mathring{B}_{q,p}^{\alpha}$ the completion of $C_{0}^{\infty}(\Omega)$
in $B_{q,p}^{\alpha}$. See Adams and Fournier \cite{AD} for more about the Besov space.

(3) For $T>0$ and a function space $X$, denote by $L^{p}(0,T;X)$
the set of Bochner measurable $X$-valued time dependent functions
$f$ such that $t\to\|f\|_{X}$ belongs to $L^{p}(0,T)$.

First we give the definition of the fractional domains of the Stokes
operator in $L^{q}$. \begin{definition} For $\alpha\in(0,1)$ and
$s,q\in(1,\infty)$, we set 
\[
\|u\|_{D_{A_{q}}^{\alpha,s}}=\|u\|_{L^{q}}+\left(\int_{0}^{\infty}\|t^{1-\alpha}A_{q}e^{-tA_{q}}u\|_{L^{q}}^{s}\frac{dt}{t}\right)^{\frac{1}{s}},
\]
where $A_{q}=-\mathbb{P}\Delta$ has domain $D(A_{q})=W^{2,q}(\Omega)\cap W_{0}^{1,q}(\Omega)\cap L_{\sigma}^{q}(\Omega)$.
Here, $\mathbb{P}$ denotes the Leray projector. \end{definition}

Roughly, the vector fields of $D_{A_{q}}^{\alpha,s}$ have $2\alpha$ derivatives in $L^{q}$, are divergence-free, and
vanish on $\partial\Omega$. In fact, we have the following imbedding
(cf. Proposition 2.5 in Danchin \cite{DA}). \begin{lemma} \label{imbedding}
$\mathring{B}_{q,s}^{2\alpha}\cap L_{\sigma}^{q}\hookrightarrow D_{A_{q}}^{\alpha,s}\hookrightarrow B_{q,s}^{2\alpha}\cap L_{\sigma}^{q}$,
for $\alpha\in(0,1)$, $1<q,s<\infty$. Moreover, if $2\alpha\leq\frac{1}{q}$,
then the three spaces are the same (with equivalent norms). \end{lemma}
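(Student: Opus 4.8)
The plan is to recognize the quantity defining $D_{A_q}^{\alpha,s}$ as the power--of--the--semigroup description of a real interpolation space, and then to sandwich that interpolation space between the two solenoidal Besov spaces. Since $\Omega$ is bounded and $\partial\Omega$ smooth, the Stokes operator $A_q=-\mathbb{P}\Delta$ generates a bounded analytic semigroup on $L_\sigma^q$ and, by the Poincar\'e inequality, satisfies $0\in\rho(A_q)$, so the integral over $(0,\infty)$ in the definition converges. The functional $u\mapsto\|u\|_{L^q}+\bigl(\int_0^\infty\|t^{1-\alpha}A_qe^{-tA_q}u\|_{L^q}^s\,\frac{dt}{t}\bigr)^{1/s}$ is, by the classical characterization of real interpolation via analytic semigroups (Lunardi, Triebel), equivalent to the norm of $(L_\sigma^q,D(A_q))_{\alpha,s}$. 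Thus it suffices to prove
\[
\mathring B_{q,s}^{2\alpha}\cap L_\sigma^q\hookrightarrow (L_\sigma^q,D(A_q))_{\alpha,s}\hookrightarrow B_{q,s}^{2\alpha}\cap L_\sigma^q .
\]

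The right-hand embedding is the easy half and follows from monotonicity of the interpolation functor. The $L^q$ regularity theory for the stationary Stokes system gives $D(A_q)\hookrightarrow W^{2,q}$ with $\|u\|_{W^{2,q}}\lesssim\|A_qu\|_{L^q}+\|u\|_{L^q}$, while trivially $L_\sigma^q\hookrightarrow L^q$. Monotonicity of $(\cdot,\cdot)_{\alpha,s}$ then yields $(L_\sigma^q,D(A_q))_{\alpha,s}\hookrightarrow(L^q,W^{2,q})_{\alpha,s}=B_{q,s}^{2\alpha}$, and since the ambient sum space is $L_\sigma^q+D(A_q)=L_\sigma^q$, the image lies in $B_{q,s}^{2\alpha}\cap L_\sigma^q$.

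The left-hand embedding is the crux. I would first reduce it, via approximation of solenoidal $\mathring B_{q,s}^{2\alpha}$ fields by $C_{0,\sigma}^\infty:=C_0^\infty(\Omega)\cap L_\sigma^q$, to the reverse interpolation inequality $\|u\|_{(L_\sigma^q,D(A_q))_{\alpha,s}}\lesssim\|u\|_{B_{q,s}^{2\alpha}}$ for $u\in C_{0,\sigma}^\infty$. Since such $u$ already lie in $D(A_q)$, the point is to estimate $\|t^{1-\alpha}A_qe^{-tA_q}u\|_{L^q}$ in $L^s(\frac{dt}{t})$ by the Besov norm. I would obtain this by comparing the Stokes semigroup with the Dirichlet heat semigroup $e^{t\Delta_D}$: writing $A_qe^{-tA_q}u=-\mathbb{P}\Delta e^{-tA_q}u$ and controlling the difference $e^{-tA_q}u-e^{t\Delta_D}u$ through the pressure (Leray) correction, which is of lower order, reduces matters to the known characterization (Seeley, Grisvard) of $(L^q,W^{2,q}\cap W_0^{1,q})_{\alpha,s}$ as $\mathring B_{q,s}^{2\alpha}$ in the relevant range. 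The main obstacle is precisely this comparison: the Leray projector $\mathbb{P}$ does not preserve the homogeneous Dirichlet condition --- it only annihilates the normal component on $\partial\Omega$ --- so the Stokes and Dirichlet--Laplacian fractional domains cannot be identified by a naive application of $\mathbb{P}$, and one must show that the tangential boundary correction is negligible at fractional order $2\alpha<2$. An alternative, and perhaps cleaner, route is by duality: the adjoint of $A_q$ on $L_\sigma^{q'}$ is the Stokes operator $A_{q'}$, and real-interpolation duality would derive the lower embedding for exponent $q$ from the (already established) upper embedding for the dual exponent $q'$, after identifying $(\mathring B_{q,s}^{2\alpha})'$ with a negative-order Besov space.

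Finally, the coincidence of the three spaces for $2\alpha\le\frac1q$ is an immediate consequence of the two embeddings. In this range the trace operator is not defined on $B_{q,s}^{2\alpha}$, so $C_0^\infty(\Omega)$ is dense in $B_{q,s}^{2\alpha}$ and hence $\mathring B_{q,s}^{2\alpha}=B_{q,s}^{2\alpha}$; the sandwich above then forces $D_{A_q}^{\alpha,s}=B_{q,s}^{2\alpha}\cap L_\sigma^q$ with equivalent norms. The borderline value $2\alpha=\frac1q$ is recovered by a standard limiting argument in $s$, or by reiteration from nearby non-exceptional exponents.
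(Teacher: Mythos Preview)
The paper does not prove this lemma at all; it merely quotes it as Proposition~2.5 of Danchin~\cite{DA} and moves on. So there is no ``paper's own proof'' to compare your attempt against.

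That said, your outline is exactly the standard route and is, in substance, what Danchin does in the cited reference: identify $D_{A_q}^{\alpha,s}$ with the real interpolation space $(L_\sigma^q,D(A_q))_{\alpha,s}$ via the analytic-semigroup characterization, get the right embedding for free from $D(A_q)\hookrightarrow W^{2,q}$ and monotonicity of $(\cdot,\cdot)_{\alpha,s}$, and work harder for the left embedding. Your diagnosis of the obstruction---that $\mathbb{P}$ kills only the normal trace, so one cannot simply transplant the known description of $(L^q,W^{2,q}\cap W_0^{1,q})_{\alpha,s}$---is accurate, and the duality alternative you mention is in fact the cleaner way through. The final paragraph, deducing coincidence for $2\alpha\le 1/q$ from $\mathring B_{q,s}^{2\alpha}=B_{q,s}^{2\alpha}$ in that range, is correct; no separate limiting argument is needed at $2\alpha=1/q$ since for $s<\infty$ density of $C_0^\infty$ already holds there.

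The only caveat is that your sketch of the left embedding remains a sketch: you name the difficulty and two possible cures but do not actually carry either one out. If you intend this as a self-contained proof rather than a pointer to the literature, the comparison with $e^{t\Delta_D}$ or the duality computation would need to be written in full.
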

We need the well-known Sobolev embedding, Ladyzhenskaya inequality
and Gagliardo-Nirenberg inequality (see Adams and Fournier \cite{AD},
Ladyzhenskaya \cite{Ladyzhen} and Nirenberg \cite{Niren}).

\begin{lemma}\label{embed} Let $\Omega\in\mathbb{R}^{2}$ be a
bounded domain with $C^{2}$ boundary. Then the following embeddings
and inequalities hold: 
\begin{enumerate}
\item $H^{1}(\Omega)\hookrightarrow L^{q}(\Omega)$, for all $q\in(1,\infty)$. 
\item $\|f\|_{L^{4}}\leq\sqrt{2}\|f\|_{L^{2}}^{\frac{1}{2}}\|\nabla f\|_{L^{2}}^{\frac{1}{2}}$,
for $f\in H_{0}^{1}(\Omega)$. 
\item $\|\nabla u\|_{L^{\infty}}\leq C\|\nabla^{2}u\|_{L^{q}}^{\alpha}\|u\|_{L^{q}}^{1-\alpha}+C\|u\|_{L^{q}}$,
for all $u\in W^{2,q}(\Omega)$, with $q\in(2,\infty)$, $\alpha=\frac{1}{2}+\frac{1}{q}$
and $C$ is a constant depending on $q,\Omega$. 
\end{enumerate}
\end{lemma}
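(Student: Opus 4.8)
The three assertions are classical; my plan is to prove (2) by a direct one-dimensional slicing argument (which tracks the constant $\sqrt{2}$) and to reduce (1) and (3) to their counterparts on $\mathbb{R}^{2}$ through a bounded extension operator, exploiting the $C^{2}$ regularity of $\partial\Omega$. Throughout I fix a Stein total extension operator $E\colon W^{k,q}(\Omega)\to W^{k,q}(\mathbb{R}^{2})$, bounded simultaneously for $k=0,1,2$, so that in particular $\|Eu\|_{L^{q}(\mathbb{R}^{2})}\le C\|u\|_{L^{q}(\Omega)}$ and $\|Eu\|_{W^{2,q}(\mathbb{R}^{2})}\le C\|u\|_{W^{2,q}(\Omega)}$; its existence is guaranteed by the smoothness of the boundary.

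For (2), by density it suffices to argue for $f\in C_{0}^{\infty}(\Omega)$, extended by zero to $\mathbb{R}^{2}$. Writing $f^{2}(x_{1},x_{2})=\int_{-\infty}^{x_{1}}2f\,\partial_{1}f\,dy_{1}$ and taking the supremum in each variable gives $\sup_{x_{1}}f^{2}\le 2\int|f||\partial_{1}f|\,dy_{1}$ and likewise in $x_{2}$. Bounding $f^{4}=f^{2}\cdot f^{2}$ by the product of these two one-variable suprema, integrating in $(x_{1},x_{2})$, and applying Fubini together with the Cauchy--Schwarz inequality yields
\[
\|f\|_{L^{4}}^{4}\le 4\,\|f\|_{L^{2}}^{2}\,\|\partial_{1}f\|_{L^{2}}\,\|\partial_{2}f\|_{L^{2}}\le 4\,\|f\|_{L^{2}}^{2}\,\|\nabla f\|_{L^{2}}^{2},
\]
and taking fourth roots produces the constant $\sqrt{2}$. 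For (1) with $q\ge 2$, the same slicing machinery applied to $Ef\in H^{1}(\mathbb{R}^{2})$ gives the Gagliardo--Nirenberg estimate $\|Ef\|_{L^{q}(\mathbb{R}^{2})}\le C\|Ef\|_{L^{2}}^{2/q}\|\nabla Ef\|_{L^{2}}^{1-2/q}\le C\|f\|_{H^{1}(\Omega)}$, and restricting to $\Omega$ finishes the case; for $1<q<2$ the embedding is immediate from $H^{1}\hookrightarrow L^{2}\hookrightarrow L^{q}$ since $\Omega$ is bounded.

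For (3), I would first establish the homogeneous whole-space inequality $\|\nabla v\|_{L^{\infty}(\mathbb{R}^{2})}\le C\|\nabla^{2}v\|_{L^{q}}^{\alpha}\|v\|_{L^{q}}^{1-\alpha}$ with $\alpha=\tfrac12+\tfrac1q$ for $v\in W^{2,q}(\mathbb{R}^{2})$: Morrey's embedding gives $\|\nabla v\|_{L^{\infty}}\le C\|\nabla v\|_{W^{1,q}}$ for $q>2$, and inserting the rescaling $v_{\lambda}(x)=v(\lambda x)$ and optimizing in $\lambda$ converts this into the scale-invariant form with exactly the exponent $\alpha$ dictated by dimensional analysis. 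Applying this to $v=Eu$, restricting to $\Omega$, and using the extension bounds together with the interpolation inequality $\|\nabla u\|_{L^{q}}\le \varepsilon\|\nabla^{2}u\|_{L^{q}}+C_{\varepsilon}\|u\|_{L^{q}}$ yields $\|\nabla^{2}Eu\|_{L^{q}}\le C(\|\nabla^{2}u\|_{L^{q}}+\|u\|_{L^{q}})$ and $\|Eu\|_{L^{q}}\le C\|u\|_{L^{q}}$; finally the subadditivity $(a+b)^{\alpha}\le a^{\alpha}+b^{\alpha}$ splits the resulting product into the stated sum.

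The main obstacle is the correction term $C\|u\|_{L^{q}}$ in (3): on a bounded domain the purely homogeneous interpolation $\|\nabla u\|_{L^{\infty}}\le C\|\nabla^{2}u\|_{L^{q}}^{\alpha}\|u\|_{L^{q}}^{1-\alpha}$ is false, since an affine function has $\nabla^{2}u=0$ yet $\nabla u\ne 0$. Consequently the argument must be routed through the full $W^{2,q}$ norm of the extension and then carefully re-split, and ensuring that this re-splitting reproduces the exact exponent $\alpha=\tfrac12+\tfrac1q$ rather than merely some admissible exponent is the delicate bookkeeping; the whole-space homogeneous Gagliardo--Nirenberg estimate is the genuine analytic input underlying the whole lemma.
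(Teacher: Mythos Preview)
Your proposal is essentially correct, and in fact goes well beyond what the paper does: the paper does not prove this lemma at all, but simply cites Adams--Fournier, Ladyzhenskaya, and Nirenberg for items (1)--(3) respectively. So there is no ``paper's own proof'' to compare against; the lemma is quoted as a standard toolbox result.

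Your arguments are sound. The slicing proof of (2) is the classical Ladyzhenskaya computation and yields exactly the constant $\sqrt{2}$. The extension-operator reduction for (1) and (3) is the standard way to transfer whole-space Gagliardo--Nirenberg inequalities to bounded domains, and your observation that the additive correction $C\|u\|_{L^{q}}$ is forced by the failure of the homogeneous estimate on affine functions is exactly the right diagnosis. One small imprecision: in the scaling step for (3) you write that Morrey gives $\|\nabla v\|_{L^{\infty}}\le C\|\nabla v\|_{W^{1,q}}$ and that optimizing the rescaling then produces the exponent $\alpha=\tfrac12+\tfrac1q$. Scaling that particular inequality actually balances $\|\nabla^{2}v\|_{L^{q}}$ against $\|\nabla v\|_{L^{q}}$, not against $\|v\|_{L^{q}}$. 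To land on the stated exponent you should instead start from the inhomogeneous estimate $\|\nabla v\|_{L^{\infty}}\le C(\|\nabla^{2}v\|_{L^{q}}+\|v\|_{L^{q}})$ (which follows from Morrey plus the interpolation $\|\nabla v\|_{L^{q}}\le C\|\nabla^{2}v\|_{L^{q}}^{1/2}\|v\|_{L^{q}}^{1/2}$), apply it to $v_{\lambda}(x)=v(\lambda x)$, and minimize in $\lambda$; this gives precisely $\alpha=\tfrac12+\tfrac1q$. With that tweak the argument is clean.
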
 
We also need the following interpolation inequality (cf. Lemma 4.1 in Danchin
~\cite{DA}). 
\begin{lemma}\label{inter} Let
$1<p,q<\infty$ satisfy $0<\frac{1}{2}-\frac{1}{q}<\frac{1}{p}$.
The following inequality holds true:
\[
\|\nabla u\|_{L^{p}(0,T;L^{\infty})}\leq CT^{\frac{1}{2}-\frac{1}{q}}\|f\|_{L^{\infty}(0,T;D_{A_{q}}^{1-\frac{1}{p}})}^{1-\theta}\|f\|_{L^{p}(0,T;W^{2,q})}^{\theta},
\]
for $C=(p,q,\Omega)$ and $\frac{1-\theta}{p}=\frac{1}{2}-\frac{1}{q}$.
\end{lemma}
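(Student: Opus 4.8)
The plan is to reduce the space--time estimate to a \emph{pointwise-in-time} interpolation inequality and then integrate in $t$ by Hölder's inequality. Writing $f=u$, and noting that for a.e.\ $t$ the field $u(t)$ is divergence free, vanishes on $\partial\Omega$ and lies in $W^{2,q}$, hence in $D(A_q)$, I would first establish the pointwise bound
\begin{equation}\tag{$\star$}
\|\nabla u(t)\|_{L^{\infty}}\leq C\,\|u(t)\|_{D_{A_{q}}^{1-\frac1p}}^{1-\theta}\,\|u(t)\|_{W^{2,q}}^{\theta},
\end{equation}
with $\frac{1-\theta}{p}=\frac12-\frac1q$ as in the statement; the hypothesis $0<\frac12-\frac1q<\frac1p$ is exactly what forces $\theta\in(0,1)$. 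Granting $(\star)$, raising to the power $p$, integrating over $(0,T)$, pulling out $\|u\|_{L^{\infty}(0,T;D_{A_{q}}^{1-1/p})}^{1-\theta}$, and applying Hölder in time with exponents $\frac1\theta$ and $\frac1{1-\theta}$ to the factor $\|u(t)\|_{W^{2,q}}^{\theta}$ gives
\[
\Big(\int_{0}^{T}\|u\|_{W^{2,q}}^{\theta p}\,dt\Big)^{1/p}\leq \|u\|_{L^{p}(0,T;W^{2,q})}^{\theta}\,T^{(1-\theta)/p},
\]
and since $(1-\theta)/p=\frac12-\frac1q$ this is precisely the claimed inequality. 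Thus everything hinges on $(\star)$.

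To prove $(\star)$ I would use the real interpolation functor. Set $X_{0}=D_{A_{q}}^{1-\frac1p}$ and $X_{1}=D(A_{q})$, which on divergence-free fields vanishing at $\partial\Omega$ carries a norm equivalent to $\|\cdot\|_{W^{2,q}}$ by Stokes elliptic regularity. By the reiteration theorem for the fractional powers of the sectorial Stokes operator (cf.\ \cite{DA}), $(X_{0},X_{1})_{\theta,1}=D_{A_{q}}^{\beta,1}$ with
\[
\beta=(1-\theta)\big(1-\tfrac1p\big)+\theta=1-\tfrac{1-\theta}{p}=\tfrac12+\tfrac1q\in(0,1).
\]
Lemma \ref{imbedding} then gives $D_{A_{q}}^{\beta,1}\hookrightarrow B_{q,1}^{2\beta}=B_{q,1}^{1+\frac2q}$, and the critical Besov embedding $B_{q,1}^{2/q}(\Omega)\hookrightarrow L^{\infty}(\Omega)$ applied to $\nabla u$ yields $B_{q,1}^{1+2/q}\hookrightarrow W^{1,\infty}$ (see \cite{AD}); hence $\|\nabla u(t)\|_{L^{\infty}}\leq C\|u(t)\|_{(X_{0},X_{1})_{\theta,1}}$. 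It remains to bound the $(\theta,1)$-norm multiplicatively. Using the $K$-functional $K(\lambda,v)=\inf_{v=a+b}\big(\|a\|_{X_{0}}+\lambda\|b\|_{X_{1}}\big)\leq\min(\|v\|_{X_{0}},\lambda\|v\|_{X_{1}})$ and splitting the defining integral at $\lambda_{*}=\|v\|_{X_{0}}/\|v\|_{X_{1}}$, the restriction $\theta\in(0,1)$ makes both pieces converge and gives
\[
\|v\|_{(X_{0},X_{1})_{\theta,1}}=\int_{0}^{\infty}\lambda^{-\theta}K(\lambda,v)\,\frac{d\lambda}{\lambda}\leq\Big(\tfrac1\theta+\tfrac1{1-\theta}\Big)\|v\|_{X_{0}}^{1-\theta}\|v\|_{X_{1}}^{\theta},
\]
which is $(\star)$ with $v=u(t)$.

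The delicate point --- and the reason a direct application of the Gagliardo--Nirenberg inequality of Lemma \ref{embed}(3) does not suffice --- is that the interpolated regularity $2\beta=1+\frac2q$ is exactly \emph{critical} for the embedding into $W^{1,\infty}$, so one must land in the Besov space with summation index $1$: a Sobolev--Slobodeckij space $W^{1+2/q,q}$ (summation index $q$ or $2$) fails to embed into $W^{1,\infty}$. What saves the argument is that the multiplicative $K$-functional estimate above is valid for \emph{every} summation index in $[1,\infty]$, only the constant degrading, so index $1$ is admissible and $(\star)$ holds at the critical exponent. By contrast, estimating $\|u(t)\|_{L^{q}}\leq C\|u(t)\|_{D_{A_{q}}^{1-1/p}}$ and invoking Lemma \ref{embed}(3) would put the exponent $\alpha=\frac12+\frac1q$ on $\|u\|_{W^{2,q}}$ and produce only the weaker time power $T^{(\frac12-\frac1q)/p}$, since it discards the $2-\frac2p$ derivatives encoded in $D_{A_{q}}^{1-1/p}$; so exploiting the full regularity of the base space through real interpolation is essential for the sharp constants. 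The only remaining routine care is the justification of the reiteration identity with the endpoint $X_{1}=D(A_{q})$ and the norm equivalence $\|u(t)\|_{D(A_{q})}\approx\|u(t)\|_{W^{2,q}}$, both standard for the Stokes operator on $C^{2+\epsilon}$ domains.
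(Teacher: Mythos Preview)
The paper does not supply its own proof of this lemma; it simply cites Lemma~4.1 of Danchin~\cite{DA}. Your argument is correct and is essentially Danchin's: a pointwise-in-time multiplicative interpolation obtained from the real interpolation couple $\big(D_{A_q}^{1-1/p,p},\,D(A_q)\big)$, landing in $D_{A_q}^{1/2+1/q,1}\hookrightarrow B_{q,1}^{1+2/q}\hookrightarrow W^{1,\infty}$ at the critical regularity, followed by H\"older in time to produce the factor $T^{(1-\theta)/p}=T^{1/2-1/q}$. Your remark that the summation index $1$ is what makes the critical embedding work---and that a naive use of Lemma~\ref{embed}(3) would waste the $2-2/p$ derivatives stored in $D_{A_q}^{1-1/p,p}$ and give only the weaker time power---is exactly the point.
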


Now we recall the following standard result for linear transport equations.
\begin{lemma}\label{tran}
Let $\Omega$ be a Lipschitz domain of $\mathbb{R}^2$ and $u\in L^{1}(0,T;W^{1,\infty})$
such that $\textrm{div }u=0$ and $u\cdot n=0$ on $\partial\Omega$.
Let $a_{0}\in W^{s,q}$ with $q\in[1,\infty)$ and $s\in[0,1]$.
Then the equation 
\begin{equation}\label{transe}
\begin{cases}
\partial_{t}a+u\cdot\nabla a=0,\\
a(x,t)|_{t=0}=a_{0},
\end{cases}
\end{equation}
has a unique solution in $C([0,T];W^{s,q})$. Moreover,
the following estimate holds true for all $t\in[0,T]$ 
\[
\|a(t)\|_{W^{s,q}}\leq\|a_{0}\|_{W^{s,q}}e^{C\int_{0}^{t}\|\nabla v(\tau)\|_{L^{\infty}}d\tau},
\]
with $C=C(s,q)$.
If in addition $a$ belongs to $L^{p}$ for some $p\in[1,\infty]$
then for all $t\in[0,T]$ 
\[
\|a(t)\|_{L^{p}}=\|a_{0}\|_{L^{p}}.
\]
\end{lemma}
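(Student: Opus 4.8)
The plan is to solve the transport equation by the method of characteristics and to read off all three assertions from the regularity of the associated flow map. First I would define the flow $X(t,x)$ as the solution of the ODE $\frac{d}{dt}X(t,x)=u(t,X(t,x))$ with $X(0,x)=x$. Since $u\in L^{1}(0,T;W^{1,\infty})$, the right-hand side is Lipschitz in the space variable with an integrable-in-time Lipschitz constant $\|\nabla u(t)\|_{L^{\infty}}$, so the integral form of the Cauchy--Lipschitz theorem gives a unique absolutely continuous trajectory through each point. The condition $u\cdot n=0$ on $\partial\Omega$ guarantees that trajectories starting in $\Omega$ remain in $\overline{\Omega}$, so that $X_{t}:=X(t,\cdot)$ is a well-defined homeomorphism of $\overline{\Omega}$ onto itself; the incompressibility $\mathrm{div}\,u=0$ forces the Jacobian of $X_{t}$ to equal $1$ identically, i.e.\ the flow is measure-preserving. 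The unique solution is then $a(t,\cdot)=a_{0}\circ X_{t}^{-1}$, and its continuity in time with values in $W^{s,q}$ follows from the continuity of $t\mapsto X_{t}^{-1}$ together with a standard approximation of $a_{0}$ by smooth functions.

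The two endpoint quantities then follow at once. The conservation $\|a(t)\|_{L^{p}}=\|a_{0}\|_{L^{p}}$ is an immediate consequence of the change of variables $x=X_{t}(\xi)$ and the fact that the Jacobian is $1$. For the $W^{s,q}$ bound the decisive ingredient is the bi-Lipschitz estimate for the flow: setting $d(t)=X_{t}(\xi)-X_{t}(\eta)$, one has $|\frac{d}{dt}d(t)|\le\|\nabla u(t)\|_{L^{\infty}}|d(t)|$, and Gronwall's inequality yields $e^{-\int_{0}^{t}\|\nabla u\|_{L^{\infty}}d\tau}|\xi-\eta|\le|X_{t}(\xi)-X_{t}(\eta)|\le e^{\int_{0}^{t}\|\nabla u\|_{L^{\infty}}d\tau}|\xi-\eta|$ for all $\xi,\eta\in\overline{\Omega}$.

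Substituting $x=X_{t}(\xi)$, $y=X_{t}(\eta)$ into the Gagliardo seminorm and using the Jacobian-$1$ property gives $\|a(t)\|_{W^{s,q}}^{q}=\int_{\Omega}\int_{\Omega}\frac{|a_{0}(\xi)-a_{0}(\eta)|^{q}}{|X_{t}(\xi)-X_{t}(\eta)|^{sq+2}}\,d\xi\,d\eta$; the lower bound in the bi-Lipschitz estimate then replaces $|X_{t}(\xi)-X_{t}(\eta)|^{-(sq+2)}$ by $e^{(sq+2)\int_{0}^{t}\|\nabla u\|_{L^{\infty}}}|\xi-\eta|^{-(sq+2)}$, and taking $q$-th roots produces exactly the claimed bound with $C=s+\frac{2}{q}$. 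The cases $s=0$ and $s=1$ are covered by the same computation (for $s=0$ it reduces to the $L^{q}$ conservation, and for $s=1$ one may alternatively differentiate the equation to get $\partial_{t}\nabla a+u\cdot\nabla(\nabla a)=-(\nabla u)\nabla a$ and close a direct $L^{q}$ energy estimate, the transport term vanishing by $\mathrm{div}\,u=0$).

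I would expect the main obstacle to be the rigorous treatment of the flow when $u$ is merely $L^{1}$ in time rather than continuous: one must work with the integral form of the characteristic ODE, establish absolute continuity of the trajectories, and verify that the measure-preserving and bi-Lipschitz properties persist in this low time-regularity setting. The boundary analysis --- confirming that $u\cdot n=0$ keeps trajectories inside the Lipschitz domain and that $X_{t}$ is a genuine homeomorphism of $\overline{\Omega}$ --- is the other delicate point, though it is classical. Once these structural facts about $X_{t}$ are in hand, all three conclusions of the lemma are short consequences.
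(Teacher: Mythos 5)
Your proposal is correct and follows essentially the same route as the paper: both solve \eqref{transe} along the volume-preserving flow of $u$, use the Gronwall bi-Lipschitz estimate $|x-y|\leq|\psi_t(x)-\psi_t(y)|e^{\int_0^t\|\nabla u\|_{L^\infty}d\tau}$ together with the unit Jacobian to change variables in the Gagliardo seminorm, and arrive at the same constant $C=s+\tfrac{2}{q}$. The only difference is cosmetic: the paper confines itself to the a priori estimate and cites Desjardins for existence and uniqueness, whereas you sketch the Cauchy--Lipschitz construction of the flow as well.
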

The result for $s=0$ and $s=1$ is well-known (cf. Proposition 3.1 in Danchin ~\cite{DA})). The case $s\in(0,1)$ 
seems to be folklore, but I can not locate the proof. Here we provide a sketched proof.  
\begin{proof}
We only establish a priori estimates and refer the reader to Desjardins \cite{De} for the existence and uniqueness parts.

Denote by $\psi_t(x)$ the flow of $u$, which is defined by 
\[\partial_t \psi_t(x)=u(\psi_t(x),t),\quad \psi_t(x)|_{t=0}=x\in \Omega,\]
It follows that \eqref{transe}  has the formal solution
\[a(x,t)=a_0(\psi_t^{-1}(x)).\]
From the assumptions $\text{div } u=0$ and $u\in L^{1}(0,T;W^{1,\infty})$, we obtain
\begin{equation}\label{par}
|\text{det }\psi_t(x)|=1,\text{ and } |x-y|\leq |\psi_t(x)-\psi_t(y)|e^{\int_0^t\|\nabla u(\tau)\|_{L^\infty}{d\tau}}.
\end{equation}
See Chapter 4 in Majda \cite{Ma} for more details.

Using the definition of fractional Sobolev space and \eqref{par}, we can compute $\|a(t)\|_{W^{s,q}}$ as follows:
\begin{align*}
\|a(t)\|_{W^{s,q}}&=\left(\int_{\Omega}\int_{\Omega}\frac{|a_0(\psi_t^{-1}(x))-a_0(\psi_t^{-1}(y))|^q}{|x-y|^{sq+2}}dxdy\right)^{\frac{1}{q}}\\
&=\left(\int_{\Omega}\int_{\Omega}\frac{|a_0(u)-a_0(v)|^q}{|\psi_t(u)-\psi_t(v)|^{sq+2}}|\text{det}\nabla \psi_t(u)||\text{det}\psi_t(v)|dudv\right)^{\frac{1}{q}}\\
&=\left(\int_{\Omega}\int_{\Omega}\frac{|a_0(u)-a_0(v)|^q}{|u-v|^{sq+2}}\left(\frac{|u-v|}{|\psi_t(u)-\psi_t(v)|}\right)^{sq+2}dudv\right)^{\frac{1}{q}}\\
&\leq \|a_0\|_{W^{s,q}}e^{(s+\frac{2}{q})\int_0^t\|\nabla u(\tau)\|_{L^\infty}{d\tau}}.
\end{align*}
This completes the proof of Lemma \ref{tran}.
\end{proof}

We conclude this section by recalling the maximal regularity of the
Stokes equations (cf. Theorem 3.2 in Danchin ~\cite{DA} or Theorem
1.1 in Solonnikov \cite{Sol}), which will be used in the proof of
Theorem \ref{mainthm}. \begin{lemma}\label{MR} Let $\Omega$ be a bounded
domain with a $C^{2+\epsilon}$ boundary in $\mathbb{R}^{2}$ and
$1<p,q<\infty$. Assume that $u_{0}\in D_{A_{q}}^{1-\frac{1}{p},p}$,
$f\in L^{p}(0,\infty;L^{q})$. Then the system 
\[
\begin{cases}
\partial_{t}u-\nu\Delta u+\nabla p=f,\\
\nabla\cdot u=0,\\
u(x,t)|_{\partial\Omega}=0,\;u(x,t)|_{t=0}=u_{0},
\end{cases}
\]
has a unique solution $(u,p)$ satisfying the following inequality
for all $T>0$: 
\begin{align*}
 & \|u\|_{L^\infty(0,T;D_{A_{q}}^{1-\frac{1}{p},p})}+\|u\|_{L^{p}(0,T;W^{2,q})}+\|\partial_{t}u\|_{L^{p}(0,T;L^{q})}+\|p\|_{L^{p}(0,T;W^{1,q})}\\
 & \leq C\left(\|u_{0}\|_{D_{A_{q}}^{1-\frac{1}{p},p}}+\|f\|_{L^{p}(0,T;L^{q})}\right),
\end{align*}
with $C=C(p,q,\nu,\Omega)$. \end{lemma}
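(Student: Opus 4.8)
The plan is to recast the Stokes system as an abstract parabolic Cauchy problem on the solenoidal space $L_\sigma^q$ and then invoke the operator-theoretic theory of maximal $L^p$-regularity. First I would apply the Leray projector $\mathbb{P}$ to the momentum equation; since $\mathbb{P}\nabla p=0$ and $\mathbb{P}\Delta u=-A_q u$ on $D(A_q)$, this removes the pressure and reduces the problem to
\[
\partial_t u+\nu A_q u=\mathbb{P}f,\qquad u(0)=u_0,
\]
whose mild solution is $u(t)=e^{-\nu tA_q}u_0+\int_0^t e^{-\nu(t-\tau)A_q}\mathbb{P}f(\tau)\,d\tau$. The lemma then amounts to two assertions: that the Duhamel map $f\mapsto A_q u$ is bounded on $L^p(0,T;L^q)$, and that the initial-data contribution $t\mapsto A_q e^{-\nu tA_q}u_0$ lies in $L^p(0,T;L^q)$ with the correct norm. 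Uniqueness of $u$ is built into this mild formulation, and the pressure is then unique up to a constant, fixed by the normalization $\int_\Omega p\,dx=0$.

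Second, I would assemble the operator-theoretic input. The essential facts, all classical for the Stokes operator on a bounded domain with sufficiently smooth boundary, are: (i) $A_q$ is closed, densely defined, injective with dense range, and $-A_q$ generates a bounded analytic semigroup on $L_\sigma^q$; and (ii) $A_q$ is $\mathcal{R}$-sectorial with $\mathcal{R}$-angle strictly less than $\pi/2$, equivalently (by Weis's theorem) it enjoys maximal $L^p$-regularity. Property (ii) can be obtained either from the $\mathcal{R}$-boundedness of the resolvent family $\{\lambda(\lambda+A_q)^{-1}\}$ on a sector, or from bounded imaginary powers of $A_q$ with power angle below $\pi/2$ via the Dore--Venni theorem. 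These hinge on sharp resolvent estimates for the Stokes resolvent problem, which in turn rely on the $L^q$-boundedness of the Helmholtz projection and elliptic regularity up to the $C^{2+\epsilon}$ boundary.

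Third, granting these properties, the abstract maximal-regularity theorem gives directly
\[
\|\partial_t u\|_{L^p(0,T;L^q)}+\|A_q u\|_{L^p(0,T;L^q)}\leq C\big(\|f\|_{L^p(0,T;L^q)}+\|u_0\|_{(L_\sigma^q,\,D(A_q))_{1-\frac{1}{p},p}}\big).
\]
The natural trace space for the datum is precisely the real interpolation space $(L_\sigma^q,D(A_q))_{1-\frac1p,p}$, whose norm is equivalent to the one defining $D_{A_q}^{1-\frac1p,p}$ in the excerpt; this is exactly the content of the characterization through $\big(\int_0^\infty\|t^{\frac1p}A_q e^{-tA_q}u\|_{L^q}^p\,\frac{dt}{t}\big)^{1/p}$. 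Since $\|A_q u\|_{L^q}\simeq\|u\|_{W^{2,q}}$ on $D(A_q)$ (again by Stokes elliptic regularity), the $L^p(0,T;W^{2,q})$ bound follows, and the $L^\infty(0,T;D_{A_q}^{1-\frac1p,p})$ bound is the time-continuity of the trace, standard in this setting. Finally I would recover the pressure by writing $\nabla p=f-\partial_t u+\nu\Delta u$, which shows $\nabla p\in L^p(0,T;L^q)$ with the stated bound; normalizing $\int_\Omega p\,dx=0$ and applying Poincar\'e's inequality upgrades this to the full $W^{1,q}$ estimate.

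The main obstacle is step two: proving the $\mathcal{R}$-sectoriality (or the bounded imaginary powers) of $A_q$. This is the genuinely hard, domain-dependent harmonic-analytic core, namely establishing uniform and $\mathcal{R}$-bounded resolvent estimates for the Stokes system near a curved boundary; by contrast, the reduction to the abstract problem and the passage from the abstract estimate back to $W^{2,q}$ and the pressure are comparatively routine. Since precisely this input is furnished by the cited works of Solonnikov and Danchin, I would ultimately lean on them for that step rather than reprove it from scratch.
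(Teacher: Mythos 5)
The paper offers no proof of this lemma at all: it is quoted verbatim as a known result, with references to Theorem 3.2 in Danchin and Theorem 1.1 in Solonnikov. Your outline is the standard operator-theoretic route to that result (Leray projection, maximal $L^p$-regularity of the $\mathcal{R}$-sectorial Stokes operator, real-interpolation characterization of the trace space, recovery of the pressure), and since you ultimately defer the hard $\mathcal{R}$-boundedness input to the very same references, your proposal is correct and in effect coincides with what the paper does; the only point worth adding is that the $T$-independence of the constant $C$ requires the invertibility of $A_{q}$ on the bounded domain (so that maximal regularity holds on the whole half-line), which holds here by the Poincar\'e inequality.
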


\section{proof of main results}

In this section, we prove Theorem \ref{mainthm} and Proposition \ref{proposition}.
To do so, we make two preparations. The first is a local existence
result for system (1.1)-(1.2). \begin{lemma}\label{local} Let the conditions
in Theorem \ref{mainthm} hold. Then there exists a $T_{0}=T_{0}(\|u_{0}\|_{D_{A_{q}}^{1-\frac{1}{p},p}},\|\theta_{0}\|_{L^{q}})>0$
such that  system (1.1)-(1.2) has a unique solution in $M_{T_{0}}^{p,q,s}$.
\end{lemma}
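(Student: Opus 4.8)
The plan is to construct the solution by a Banach fixed-point argument built on the maximal regularity of the Stokes operator (Lemma \ref{MR}) and the transport estimate (Lemma \ref{tran}), in the spirit of Danchin \cite{DA}. Let $E_{T}$ denote the velocity component of $M_{T}^{p,q,s}$, that is, the Banach space of fields $v\in C([0,T];D_{A_{q}}^{1-\frac{1}{p},p})\cap L^{p}(0,T;W^{2,q}\cap W_{0}^{1,q})$ with $\partial_{t}v\in L^{p}(0,T;L^{q})$, with the natural norm. Given $\bar u$ in a ball $B_{R}\subset E_{T}$, I would first solve the linear transport equation $\partial_{t}\theta+\bar u\cdot\nabla\theta=0$, $\theta|_{t=0}=\theta_{0}$, whose unique solution is provided by Lemma \ref{tran} and satisfies the conservation $\|\theta(t)\|_{L^{q}}=\|\theta_{0}\|_{L^{q}}$; then I would solve the Stokes system of Lemma \ref{MR} with forcing $f=\theta e_{2}-\bar u\cdot\nabla\bar u$ and data $u_{0}$, and set $u=\Phi(\bar u)$. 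A fixed point of $\Phi$, together with its associated $\theta$, is the desired local solution.

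To establish the self-mapping property $\Phi(B_{R})\subset B_{R}$, I would apply Lemma \ref{MR}, which bounds $\|u\|_{E_{T}}$ by $C(\|u_{0}\|_{D_{A_{q}}^{1-\frac{1}{p},p}}+\|f\|_{L^{p}(0,T;L^{q})})$. The temperature contribution is harmless: since the $L^{q}$ norm of $\theta$ is conserved, $\|\theta e_{2}\|_{L^{p}(0,T;L^{q})}\le T^{\frac{1}{p}}\|\theta_{0}\|_{L^{q}}$. For the advection term I would split by H\"older as $\|\bar u\cdot\nabla\bar u\|_{L^{q}}\le\|\bar u\|_{L^{\infty}}\|\nabla\bar u\|_{L^{q}}$ and invoke Lemma \ref{inter} together with the embedding $D_{A_{q}}^{1-\frac{1}{p},p}\hookrightarrow L^{\infty}$ to produce a factor $T^{\frac{1}{2}-\frac{1}{q}}$, which is a positive power of $T$ because $q>2$. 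Choosing $R=2C\|u_{0}\|_{D_{A_{q}}^{1-\frac{1}{p},p}}$ (plus the conserved temperature contribution) and then $T_{0}$ small enough that these powers of $T$ absorb the quadratic and linear terms, $\Phi$ maps $B_{R}$ into itself, with an existence time depending only on $\|u_{0}\|_{D_{A_{q}}^{1-\frac{1}{p},p}}$ and $\|\theta_{0}\|_{L^{q}}$, as asserted.

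The delicate step is the contraction, and the hard part will be the coupling through the transport equation, which provides no smoothing. For $\bar u_{1},\bar u_{2}\in B_{R}$ with associated temperatures $\theta_{i}$, the difference $w=\theta_{1}-\theta_{2}$ solves $\partial_{t}w+\bar u_{1}\cdot\nabla w=-\nabla\cdot\bigl((\bar u_{1}-\bar u_{2})\theta_{2}\bigr)$, where I have used $\nabla\cdot(\bar u_{1}-\bar u_{2})=0$ to write the source in divergence form. Since the source sits one derivative below $\theta_{2}$, I would estimate $w$ one order lower than $\theta$, controlling $\|w\|_{L^{\infty}(0,T;W^{-1,q})}$ by $\|(\bar u_{1}-\bar u_{2})\theta_{2}\|_{L^{1}(0,T;L^{q})}\le CT^{\frac{1}{2}-\frac{1}{q}}\|\theta_{0}\|_{L^{q}}\|\bar u_{1}-\bar u_{2}\|_{E_{T}}$ via Lemma \ref{inter}. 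The velocity difference $\delta=u_{1}-u_{2}$ then solves a Stokes system with vanishing data and forcing $w e_{2}-\bigl(\bar u_{1}\cdot\nabla(\bar u_{1}-\bar u_{2})+(\bar u_{1}-\bar u_{2})\cdot\nabla\bar u_{2}\bigr)$; the resolution of the regularity mismatch is to close the contraction in a weaker velocity metric (such as $L^{\infty}(0,T;L^{q})\cap L^{p}(0,T;W^{1,q})$), in which the $W^{-1,q}$ temperature difference is admissible, again extracting positive powers of $T$ from Lemma \ref{inter} to reach $\|\delta\|\le\frac{1}{2}\|\bar u_{1}-\bar u_{2}\|$ after shrinking $T_{0}$. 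Thus $\Phi$ is a contraction with a unique fixed point.

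Finally I would upgrade the regularity of the temperature and assemble the solution in $M_{T_{0}}^{p,q,s}$. The fixed point $u$ lies in $E_{T_{0}}\subset L^{1}(0,T_{0};W^{1,\infty})$ by Lemma \ref{inter}, so Lemma \ref{tran} applied with exponent $s$ yields $\theta\in C([0,T_{0}];W^{s,q})$ with $\partial_{t}\theta=-u\cdot\nabla\theta\in L^{p}(0,T_{0};W^{-1,q})$, while the pressure is recovered from Lemma \ref{MR}; hence $(u,p,\theta)\in M_{T_{0}}^{p,q,s}$. Uniqueness in $M_{T_{0}}^{p,q,s}$ follows from the same difference estimates applied to any two solutions. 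This completes the proof of Lemma \ref{local}.
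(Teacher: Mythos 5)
Your overall strategy is genuinely different from the paper's: you build the solution by a Banach fixed-point iteration coupling Lemma \ref{tran} and Lemma \ref{MR}, whereas the paper regularizes the data, invokes the global smooth solutions of Lai--Pan--Zhao to get approximate solutions, proves a uniform bound on a small interval via a quadratic inequality and a continuity argument, passes to the limit by Aubin--Lions, and outsources uniqueness entirely to He's theorem on energy weak solutions. A contraction argument is a perfectly legitimate alternative and would even yield uniqueness for free, but as written your proof has a gap at its central estimate.

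The self-map step fails for the parameter range the lemma actually covers. You bound the advection term by $\|\bar u\|_{L^{\infty}}\|\nabla \bar u\|_{L^{q}}$ and invoke the embedding $D_{A_{q}}^{1-\frac{1}{p},p}\hookrightarrow L^{\infty}$. Since $D_{A_{q}}^{1-\frac{1}{p},p}$ sits between Besov spaces of smoothness $2-\frac{2}{p}$ over $L^{q}$ (Lemma \ref{imbedding}), the embedding into $L^{\infty}$ in two dimensions requires $2-\frac{2}{p}>\frac{2}{q}$, i.e.\ $\frac{1}{p}+\frac{1}{q}<1$. Theorem \ref{mainthm} allows all $p\in(1,\infty)$, and the whole point of the paper is the rough-data regime $p$ close to $1$ (only $2-\frac{2}{p}=\epsilon$ derivatives), where this embedding is false. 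Moreover, with your H\"older split the factor $\|\nabla\bar u\|_{L^{p}(0,T;L^{q})}$ carries no positive power of $T$, so even where the embedding holds you do not obtain the smallness needed to absorb the quadratic term; Lemma \ref{inter} concerns $\|\nabla u\|_{L^{p}(0,T;L^{\infty})}$ and is not applicable to the factor you place in $L^{\infty}$. The paper avoids this by using the opposite split $\|u\|_{L^{q}}\|\nabla u\|_{L^{\infty}}$ and then treating three regimes separately: $\frac{2}{p}+\frac{2}{q}>1$ (Lemma \ref{inter} gives the factor $t^{\frac12-\frac1q}$), $\frac{2}{p}+\frac{2}{q}<1$ (where $D_{A_{q}}^{1-\frac{1}{p},p}\hookrightarrow W^{1,\infty}$ does hold), and the borderline case via an $L^{q^{+}}$ trick. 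Your argument needs this case analysis, or something equivalent, to cover all admissible $(p,q)$. A secondary gap: your contraction step estimates the temperature difference in $W^{-1,q}$, but Lemma \ref{tran} only provides transport estimates for $s\in[0,1]$; a negative-order estimate (by duality with the backward transport equation) is an additional ingredient you would have to prove. The paper sidesteps the contraction entirely by citing He's uniqueness result, which is why it never needs such an estimate.
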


\begin{proof} The proof consists of several steps, including constructing
the approximate solutions, obtaining the uniform local
in time estimates, showing the convergence, and proving the uniqueness.

\textbf{First step: Construction of approximate solutions.} We initialize
the construction of approximate solutions by smoothing out the initial
data $(u_{0},\theta_{0})$ and get a sequence of smooth initial data
$(u_{0}^{n},\theta_{0}^{n})_{n\in\mathbb{N}}$ which is bounded in
$D_{A_{q}}^{1-\frac{1}{p},p}\times L^q$. In addition, these smooth
data belong to the Sobolev space $H^3$. Hence, applying the result
of Lai, Pan, and Zhao \cite{Lai} provides us a sequence of smooth
global solutions $(u^{n},p^{n},\theta^{n})_{n\in\mathbb{N}}$, which  satisfy that  $(u^{n},\theta^{n})\in C([0,\infty);H^{3})\cap C^{1}([0,\infty);H^{2})$ and $p^{n}\in C([0,\infty);H^{3})$.

\textbf{Second step: Uniform estimate for some small fixed time $T_0$.}
We aim at finding a positive time $T_0$ independent of $n$ for which
$(u^{n},p^{n},\theta^{n})_{n\in\mathbb{N}}$ is uniformly bounded
in the space $M_{T_{0}}^{p,q,s}$.

Applying Lemma \ref{tran} to the temperature equation, we find that
for all $t\geq0$ and $s\in[0,1]$
\begin{equation}
\|\theta^{n}\|_{L^{\infty}(0,t;L^{q})}\leq\|\theta_{0}\|_{L^{q}},\label{tem}
\end{equation}
and 
\begin{equation}
\|\theta^{n}\|_{L^{\infty}(0,t;W^{s,q})}\leq\|\theta_{0}\|_{W^{s,q}}e^{C\int_{0}^{t}\|\nabla u^{n}(\tau)\|_{L^{\infty}}d\tau}.\label{temgra}
\end{equation}

 Considering the velocity equation, we obtain
\begin{equation}
\begin{aligned} & \|u^{n}\|_{L^{\infty}(0,t;D_{A_{q}}^{1-\frac{1}{p},p})}+\|\partial_{t}u^{n}\|_{L^{p}(0,t;L^{q})}+\|u^{n}\|_{L^{p}(0,t;W^{2,q})}+\|p^{n}\|_{L^{p}(0,t;W^{1,q})}\\
 & \leq C\left(\|u_{0}\|_{D_{A_{q}}^{1-\frac{1}{p},p}}+\|u^{n}\cdot\nabla u^{n}\|_{L^{p}(0,t;L^{q})}+\|\theta^{n}\|_{L^{p}(0,t;L^{q})}\right)\\
 & \leq C\left(\|u_{0}\|_{D_{A_{q}}^{1-\frac{1}{p},p}}+\|u^{n}\|_{L^{\infty}(0,t;L^{q})}\|\nabla u^{n}\|_{L^{p}(0,t;L^{\infty})}+\|\theta^{n}\|_{L^{p}(0,t;L^{q})}\right).
\end{aligned}
\label{localu}
\end{equation}
If $\frac{2}{p}+\frac{2}{q}>1$, Lemma \ref{inter} yields for $\theta=1-\frac{p}{2}(1-\frac{2}{q})$
\begin{equation}
\|\nabla u^{n}\|_{L^{p}(0,t;L^{\infty})}\leq Ct^{\frac{1}{2}-\frac{1}{q}}\|u^{n}\|_{L^{p}(0,t;W^{2,q})}^{\theta}\|u^{n}\|_{L^{\infty}(0,t;D_{A_{q}}^{1-\frac{1}{p},p})}^{1-\theta}.\label{inter1}
\end{equation}
If $\frac{2}{p}+\frac{2}{q}<1$, we have $D_{A_{q}}^{1-\frac{1}{p},p}\hookrightarrow W^{1,\infty}$ so that
\begin{equation}
\|\nabla u^{n}\|_{L^{p}(0,t;L^{\infty})}\leq Ct^{\frac{1}{p}}\|u^{n}\|_{L^{\infty}(0,t;D_{A_{q}}^{1-\frac{1}{p},p})}.\label{inter2}
\end{equation}
If $\frac{2}{p}+\frac{2}{q}=1$, applying H\"{o}lder's inequality, we
arrive at the following inequality, 
\begin{equation}
\|u^{n}\cdot\nabla u^{n}\|_{L^{p}(0,t;L^{q})}\leq\|u^{n}\|_{L^{\infty}(0,t;L^{q+})}\|\nabla u^{n}\|_{L^{p}(0,t;L^{\frac{qq^{+}}{q^{+}-q}})},\label{inter3}
\end{equation}
where $q^{+}$ is slightly bigger than $q$. Noticing that $D_{A_{q}}^{1-\frac{1}{p},p}\hookrightarrow W^{1,\frac{qq^{+}}{q^{+}-q}}$ and
$D_{A_{q}}^{1-\frac{1}{p},p}\hookrightarrow L^{q^{+}}$, we eventually
get 
\begin{equation}
\|u^{n}\cdot\nabla u^{n}\|_{L^{p}(0,t;L^{q})}\leq Ct^{\frac{1}{p}}\|u^{n}\|_{L^{\infty}(0,t;D_{A_{q}}^{1-\frac{1}{p},p})}^{2}.\label{inter4}
\end{equation}
On the other hand, combining H\"{o}lder's inequality and \eqref{tem},
we have
\begin{equation}
\|\theta^{n}\|_{L^{p}(0,t;L^{q})}\leq t^{\frac{1}{p}}\|\theta^{n}\|_{L^{\infty}(0,t;L^{q})}\leq t^{\frac{1}{p}}\|\theta_{0}\|_{L^{q}}.\label{inter5}
\end{equation}

Define 
\[
U^{n}(t)=\|u^{n}\|_{L^{\infty}(0,t;D_{A_{q}}^{1-\frac{1}{p},p})}+\|\partial_{t}u^{n}\|_{L^{p}(0,t;L^{q})}+\|u^{n}\|_{L^{p}(0,t;W^{2,q})},
\]
and
\[
\|U^{0}\|=\|u_{0}\|_{D_{A_{q}}^{1-\frac{1}{p},p}}+\|\theta_{0}\|_{L^{q}}.
\]
Inserting \eqref{inter1}-\eqref{inter5} into \eqref{localu}, we
deduce that 
\begin{equation}
U^{n}(t)\leq C\left(\|u_{0}\|_{D_{A_{q}}^{1-\frac{1}{p},p}}+\max(t^{\frac{1}{2}-\frac{1}{q}},t^{\frac{1}{p}})(U^{n}(t))^{2}+t^{\frac{1}{p}}\|\theta_{0}\|_{L^{q}}\right).\label{ineU}
\end{equation}
Set
\[
T_{0}=\min\left\{ 1,\left(\frac{1}{4CU_{0}}\right)^{p},\left(\frac{1}{4CU_{0}}\right)^{\frac{2q}{q-2}}\right\}.
\]
Direct computations show that for $t\in[0,T_{0}]$ 
\begin{equation}\label{localR}
U^{n}(t)\leq I_1(t)=\frac{1-\sqrt{1-4C\max(t^{\frac{1}{2}-\frac{1}{q}},t^{\frac{1}{p}})U_{0}}}{2\max(t^{\frac{1}{2}-\frac{1}{q}},t^{\frac{1}{p}})}\leq2CU_{0},
\end{equation}
or
\begin{equation}\label{localWR}
U^{n}(t)\geq I_2(t)=\frac{1+\sqrt{1-4C\max(t^{\frac{1}{2}-\frac{1}{q}},t^{\frac{1}{p}})U_0}}{2\max(t^{\frac{1}{2}-\frac{1}{q}},t^{\frac{1}{p}})}\geq\frac{1}{2\max(t^{\frac{1}{2}-\frac{1}{q}},t^{\frac{1}{p}})}.
\end{equation}
We show that \eqref{localR} holds for $t\in [0,T_0]$. Since  $u^{n}\in C([0,\infty);H^{3})$ and $\lim_{t\to 0}I_2(t)=\infty$, there exists some time $T_1>0$ such that \eqref{localR} holds for $t\in [0,T_1]$. By contradiction, suppose that \eqref{localR} does not hold for all $t\in [0,T_0]$, then there exists a first time  $T_2>0$ such that \eqref{localWR} holds. It follows that $\lim_{t\to T_2-}U^n(t)\leq I_1(T_2)$ and $U^n(T_2)\geq I_2(T_2)$, which contradicts the fact $u^{n}\in C([0,\infty);H^{3})$.  Hence we have for all $t\in[0,T_{0}]$
\begin{equation}
U^{n}(t)\leq2CU_{0}.
\end{equation}

Coming back to \eqref{temgra}, noting that \[\int_0^t\|\nabla u^{n}(\tau)\|_{L^{\infty}}d\tau \leq \int_0^t\|u^n\|_{W^{2,q}}d\tau\leq t^{1-\frac{1}{p}}\|u^n\|_{L^p(0,T;W^{2,q})},\] 
we derive that
\begin{equation}\label{Bout}
\theta\in C([0,T];W^{s,q}),\quad \partial_{t}\theta^{n}\in L^{p}(0,T;W^{-1,q}).
\end{equation}

\textbf{Third step: Passing to the limit.} 
Since $(u^{n},p^{n},\theta^{n})_{n\in\mathbb{N}}$ is uniformly bounded
in the space $M_{T_{0}}^{p,q,s}$, applying Aubin-Lions lemma yields the solution to system (1.1)-(1.2) which belongs to $ M_{T_{0}}^{p,q,s}$.

\textbf{Fifth step: Uniqueness.}
The uniqueness is implied by the result in He \cite{He}, which says that the energy weak solution to system (1.1)-(1.2) is unique. This completes the proof of Lemma \ref{mainlemma}.
\end{proof} 

The following lemma is the main ingredient of the proof of Theorem \ref{mainthm}.
\begin{lemma}\label{mainlemma} Let $\Omega$
be a bounded domain in $\mathbb{R}^{2}$ with $C^{2+\epsilon}$ boundary
for some $\epsilon>0$. Let $p\in(1,\infty)$, $q\in(2,\infty)$ and
$s\in[0,1]$. Suppose that $u_{0}\in D_{A_{q}}^{1-\frac{1}{p},p}\cap H^{1}$
and $\theta_{0}\in W^{s,q}$. Then system (1.1)-(1.2) has a unique global solution $(u,p,\theta)$
which belongs to $M_{T}^{p,q,s}$ for all $T>0$. Furthermore, we
have 
\[
u\in L^{\infty}(0,T;H^{1})\cap L^{2}(0,T;H^{2}).
\]
\end{lemma}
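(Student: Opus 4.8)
The plan is to argue by continuation. Lemma \ref{local} produces a unique solution on a maximal interval $[0,T^{*})$ in the class $M^{p,q,s}$, and since the local existence time depends only on $\|u_{0}\|_{D_{A_{q}}^{1-1/p,p}}$ and on $\|\theta_{0}\|_{L^{q}}$ — the latter being conserved by Lemma \ref{tran} — it suffices to produce, on every finite interval $[0,T]$, a priori bounds keeping $\|u(t)\|_{D_{A_{q}}^{1-1/p,p}}$ finite (equivalently, keeping $\int_{0}^{T}\|\nabla u\|_{L^{\infty}}\,d\tau$ finite, so that $\theta$ stays bounded in $W^{s,q}$). The extra hypothesis $u_{0}\in H^{1}$ is exactly what lets us run the two energy estimates below; these are the new ingredient absent from the proof of Theorem \ref{mainthm}.

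First I would run the basic energy estimate: testing the momentum equation against $u$, using $\nabla\cdot u=0$ and the no-slip condition to remove the convective and pressure terms, and bounding the buoyancy term by $\|\theta\|_{L^{2}}\|u\|_{L^{2}}=\|\theta_{0}\|_{L^{2}}\|u\|_{L^{2}}$, one gets $u\in L^{\infty}(0,T;L^{2})\cap L^{2}(0,T;H^{1})$ with bounds finite on every $[0,T]$. The $H^{1}$ estimate is the heart of the matter. Testing against the Stokes operator $Au=-\mathbb{P}\Delta u$ removes the pressure (since $Au$ is solenoidal) and produces $\tfrac12\tfrac{d}{dt}\|\nabla u\|_{L^{2}}^{2}+\nu\|Au\|_{L^{2}}^{2}$ on the left. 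The buoyancy term is absorbed via $|\int\theta e_{2}\cdot Au|\le\tfrac{\nu}{4}\|Au\|_{L^{2}}^{2}+C\|\theta_{0}\|_{L^{2}}^{2}$, and for the convective term I would use the two-dimensional Ladyzhenskaya inequality (Lemma \ref{embed}(2)) together with elliptic regularity $\|\nabla^{2}u\|_{L^{2}}\le C\|Au\|_{L^{2}}$ to obtain
\[
\Big|\int_{\Omega}(u\cdot\nabla u)\cdot Au\,dx\Big|\le C\|u\|_{L^{2}}^{1/2}\|\nabla u\|_{L^{2}}\|Au\|_{L^{2}}^{3/2}\le\tfrac{\nu}{4}\|Au\|_{L^{2}}^{2}+C\|u\|_{L^{2}}^{2}\|\nabla u\|_{L^{2}}^{4}.
\]
This yields $\tfrac{d}{dt}\|\nabla u\|_{L^{2}}^{2}+\nu\|Au\|_{L^{2}}^{2}\le C\|u\|_{L^{2}}^{2}\|\nabla u\|_{L^{2}}^{4}+C\|\theta_{0}\|_{L^{2}}^{2}$, and since $\int_{0}^{T}\|\nabla u\|_{L^{2}}^{2}\,d\tau<\infty$ from the first estimate, Gronwall's inequality with the integrable coefficient $C\|u\|_{L^{2}}^{2}\|\nabla u\|_{L^{2}}^{2}$ closes the bound and gives exactly the asserted regularity $u\in L^{\infty}(0,T;H^{1})\cap L^{2}(0,T;H^{2})$, with constants that remain finite as $T\to T^{*}$ when $T^{*}<\infty$.

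With these bounds I would close the continuation. Using the two-dimensional embeddings $H^{1}\hookrightarrow L^{2q}$ and $H^{2}\hookrightarrow W^{1,2q}$, the advection term satisfies $\|u\cdot\nabla u\|_{L^{q}}\le\|u\|_{L^{2q}}\|\nabla u\|_{L^{2q}}\le C\|u\|_{H^{1}}\|u\|_{H^{2}}$, so $u\cdot\nabla u\in L^{2}(0,T;L^{q})$ with a finite bound. Feeding $f=-u\cdot\nabla u+\theta e_{2}\in L^{2}(0,T;L^{q})$ into the Stokes maximal regularity (Lemma \ref{MR}) controls the velocity part of the $M_{T}^{p,q,s}$ norm; the interpolation of Lemma \ref{inter} (or, when $\tfrac{2}{p}+\tfrac{2}{q}\le1$, the embedding $D_{A_{q}}^{1-1/p,p}\hookrightarrow W^{1,\infty}$) then gives $\nabla u\in L^{1}(0,T;L^{\infty})$, and Lemma \ref{tran} propagates $\theta\in L^{\infty}(0,T;W^{s,q})$. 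Hence $\|u(t)\|_{D_{A_{q}}^{1-1/p,p}}$ cannot blow up at a finite $T^{*}$, the local solution extends to all of $[0,\infty)$, and uniqueness is inherited from the energy-space uniqueness of He \cite{He}, exactly as in Lemma \ref{local}.

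The main obstacle is the mismatch between the integrability exponent $2$ delivered by the energy method and the exponent $p$ demanded by the $M_{T}^{p,q,s}$ framework: the convective term is controlled only in $L^{2}(0,T;L^{q})$, which feeds directly into maximal regularity when $p\le2$ but not when $p>2$. For large $p$ I expect to need a short bootstrap, restarting the Stokes maximal regularity from a positive time $t_{0}>0$ at which $u(t_{0})\in D(A_{2})\hookrightarrow D_{A_{q}}^{1/2,q}$ (a two-dimensional embedding), gaining $u\in L^{2}(t_{0},T;W^{2,q})$, and then successively improving the time-integrability of $u\cdot\nabla u$ until the exponent $p$ is reached in finitely many steps. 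Checking that this iteration terminates and that all constants stay finite up to $T^{*}$ is the delicate point; everything else is the energy estimate above together with bookkeeping via the quoted embeddings and lemmas.
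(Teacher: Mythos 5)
Your Steps 1 and 2 (the $L^{2}$ energy estimate and the $H^{1}$ estimate obtained by testing against $-\mathbb{P}\Delta u$) coincide with the paper's proof, down to the Ladyzhenskaya bound on the convective term and the Gronwall argument with the integrable coefficient $\|u\|_{L^{2}}^{2}\|\nabla u\|_{L^{2}}^{2}$. The problem is in your third step, and you have diagnosed its location yourself: estimating $\|u\cdot\nabla u\|_{L^{q}}\leq C\|u\|_{H^{1}}\|u\|_{H^{2}}$ only places the forcing in $L^{2}(0,T;L^{q})$, which feeds the maximal regularity at exponent $p$ only when $p\leq2$, while the lemma is asserted for all $p\in(1,\infty)$. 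Your proposed remedy for $p>2$ --- restarting at some $t_{0}>0$ and iteratively upgrading the time integrability --- is left as a sketch whose termination and uniformity of constants you explicitly flag as unverified. As written, that is a genuine gap for the range $p>2$.

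The paper closes this range without any iteration, by estimating the convective term differently. Write $\|u\cdot\nabla u\|_{L^{q}}\leq\|u\|_{L^{q}}\|\nabla u\|_{L^{\infty}}$ and apply the Gagliardo--Nirenberg inequality of Lemma \ref{embed}(3), $\|\nabla u\|_{L^{\infty}}\leq C\|\nabla^{2}u\|_{L^{q}}^{\alpha}\|u\|_{L^{q}}^{1-\alpha}+C\|u\|_{L^{q}}$ with $\alpha=\frac{1}{2}+\frac{1}{q}<1$; Young's inequality then yields, for any $\epsilon>0$,
\[
\|u\cdot\nabla u\|_{L^{p}(0,T;L^{q})}\leq C\epsilon\|u\|_{L^{\infty}(0,T;L^{q})}\|\nabla^{2}u\|_{L^{p}(0,T;L^{q})}+C(\epsilon)T^{\frac{1}{p}}\|u\|_{L^{\infty}(0,T;L^{q})}^{2}.
\]
Since $\|u\|_{L^{\infty}(0,T;L^{q})}$ is already controlled by Step 2 through $H^{1}\hookrightarrow L^{q}$, choosing $\epsilon$ small lets the first term be absorbed into the left-hand side of the maximal regularity inequality \eqref{mainine}. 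This closes the $W^{2,q}$ bound for every $p\in(1,\infty)$ in one stroke, after which Lemma \ref{inter} (or the embedding $D_{A_{q}}^{1-\frac{1}{p},p}\hookrightarrow W^{1,\infty}$) and Lemma \ref{tran} give $\nabla u\in L^{1}(0,T;L^{\infty})$ and the propagation of $\theta$ in $W^{s,q}$, exactly as in your final paragraph. If you insist on your route, you must actually carry out the induction (the exponent improves by the factor $1/\alpha>1$ at each step, so it does terminate in finitely many steps), but the absorption argument is shorter and is what the paper does.
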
 We divide the proof of Lemma \ref{mainlemma} into three
steps. First, we recall some elementary energy estimates. Next, we
derive global $H^{1}$ estimate for the velocity. Finally, we use the maximal
regularity of the Stokes operator to improve the regularity for both
the velocity and the temperature.

\begin{proof} \textbf{Step 1 Energy Estimates.}

Let $T>0$ be any fixed given time. Reasoning as in Lemma \ref{local},
we get from the temperature equation for all $r\in[1,\infty)$
\begin{equation}
\theta\in L^{\infty}(0,T;L^{r}).\label{boundtheta}
\end{equation}
The basic energy estimate for the velocity equation yields that 
\[
\frac{1}{2}\frac{d}{dt}\|u\|_{L^{2}}^{2}+\nu\|\nabla u\|_{L^{2}}^{2}\leq\|u\|_{L^{2}}\|\theta\|_{L^{2}}.
\]
Applying Gronwall's inequality, we have 
\begin{equation}
u\in L^{\infty}(0,T;L^{2})\cap L^{2}(0,T;H^{1}).\label{energyu}
\end{equation}

\textbf{Step 2 $H^{1}$ Estimate for the Velocity.}

Taking $L^{2}$-inner product of the velocity equation with $-\mathbb{P}\Delta u$,
where $\mathbb{P}$ is the Leray projector. we deduce that 
\begin{equation}\label{gradient}
\frac{1}{2}\frac{d}{dt}\|\nabla u\|_{L^{2}}^{2}+\nu\|\mathbb{P}\Delta u\|_{L^{2}}^{2}=\int_{\Omega}u\cdot\nabla u\mathbb{P}\Delta udx-\int_{\Omega}\theta e_{2}\mathbb{P}\Delta udx.
\end{equation}
We now estimate the right hand side of \eqref{gradient}. For the
first term, using H\"{o}lder's inequality, Gagliardo-Nirenberg's inequality
and Young's inequality, we get 
\begin{equation}\label{nonline}
\begin{aligned}\left|\int_{\Omega}u\cdot\nabla u\mathbb{P}\Delta udx\right| & \leq C\|u\|_{L^{4}}\|\nabla u\|_{L^{4}}\|\mathbb{P}\Delta u\|_{L^{2}}\\
 & \leq C\|u\|_{L^{2}}^{\frac{1}{2}}\|\nabla u\|_{L^{2}}\|\mathbb{P}\Delta u\|_{L^{2}}^{\frac{3}{2}}\\
 & \leq C\|u\|_{L^{2}}^{2}\|\nabla u\|_{L^{2}}^{4}+\frac{1}{4}\nu\|\mathbb{P}\Delta u\|_{L^{2}}^{2}.
\end{aligned}
\end{equation}
For the second term, it follows from the Cauchy-Schwarz inequality
that 
\begin{equation}\label{inetheta}
\left|\int_{\Omega}\theta e_{2}\mathbb{P}\Delta udx\right|\leq C\|\theta\|_{2}^{2}+\frac{1}{4}\nu\|\mathbb{P}\Delta u\|_{2}^{2}
\end{equation}
Substituting \eqref{nonline} and \eqref{inetheta} into \eqref{gradient}, we find that 
\[
\frac{d}{dt}\|\nabla u\|_{L^{2}}^{2}+\nu\|\mathbb{P}\Delta u\|_{L^{2}}^{2}\leq C\left(\|u\|_{L^{2}}^{2}\|\nabla u\|_{L^{2}}^{2}\right)\|\nabla u\|_{L^{2}}^{2}+\|\theta\|_{L^{2}}^{2}.
\]
Then, from  \eqref{boundtheta}, \eqref{energyu} and Gronwall's
inequality, we obtain 
\begin{equation}
u\in L^{\infty}(0,T;H^{1})\cap L^{2}(0,T;H^{2}),
\end{equation}
which, by the Sobolev embedding
(Lemma \ref{embed}), implies that for all $q\in(2,\infty)$ 
\begin{equation}
u\in L^{\infty}(0,T;L^{q}).\label{boundu}
\end{equation}

\textbf{Step 3 Bootstrap Argument.}

We derive $W^{2,p}$ estimate for the velocity by the maximal regularity of the Stokes operator. To this end, we rewrite
the velocity equation as follows, 
\begin{align*}
\partial_{t}u-\nu\Delta u+\nabla p & =-u\cdot\nabla u+\theta e_{2},\\
\nabla\cdot u & =0.
\end{align*}
Using Lemma \ref{MR}, we see that for $p\in(1,\infty)$, $q\in(2,\infty)$,
\begin{equation}
\begin{aligned} & \|u\|_{L^{\infty}(0,T;D_{A_{q}}^{1-\frac{1}{p},p})}+\|\partial_{t}u\|_{L^{p}(0,T;L^{q})}+\|u\|_{L^{p}(0,T;W^{2,q})}+\|p\|_{L^{p}(0,T;W^{1,q})}\\
 & \leq C\left(\|u_{0}\|_{D_{A_{q}}^{1-\frac{1}{p},p}}+\|u\cdot\nabla u\|_{L^{p}(0,T;L^{q})}+\|\theta\|_{L^{p}(0,T;L^{q})}\right).
\end{aligned}
\label{mainine}
\end{equation}
We now estimate the term $\|u\cdot\nabla u\|_{L^{p}(0,T;L^{q})}$.
Applying the interpolation inequality in Lemma 2.2, H\"{o}lder's inequality
and Young's inequality, we find for any $\epsilon>0$ 
\begin{align*}
\|u\cdot\nabla u\|_{L^{p}(0,T;L^{q})} & \leq\|\nabla u\|_{L^{p}(0,T;L^{\infty})}\|u\|_{L^{\infty}(0,T;L^{q})}\\
 & \leq C\left(\|\nabla^{2}u\|_{L^{p}(0,T;L^{q})}^{\alpha}\|u\|_{L^{p}(0,T;L^{q})}^{1-\alpha}+\|u\|_{L^{p}(0,T;L^{q})}\right)\|u\|_{L^{\infty}(0,T;L^{q})}\\
 & \leq C\left(\epsilon\|\nabla^{2}u\|_{L^{p}(0,T;L^{q})}+C(\epsilon)\|u\|_{L^{p}(0,T;L^{q})}\right)\|u\|_{L^{\infty}(0,T;L^{q})}\\
 & \leq C\epsilon\|u\|_{L^{\infty}(0,T;L^{q})}\|\nabla^{2}u\|_{L^{p}(0,T;L^{q})}+C(\epsilon)T^{\frac{1}{p}}\|u\|_{L^{\infty}(0,T;L^{q})}^{2}.
\end{align*}
Choosing $\epsilon$ small such that $C\epsilon\|u\|_{L^{\infty}(0,T;L^{q})}\leq\frac{1}{2C}$,
we get 
\begin{equation}
\|u\cdot\nabla u\|_{L^{p}(0,T;L^{q})}\leq\frac{1}{2C}\|\nabla^{2}u\|_{L^{p}(0,T;L^{q})}+CT^{\frac{1}{p}}\|u\|_{L^{\infty}(0,T;L^{q})}^{2}.\label{Boundnonl}
\end{equation}
Substituting  \eqref{Boundnonl} into \eqref{mainine}, together with
\eqref{boundtheta} and  \eqref{boundu}, we deduce that 
\begin{align*}
 & {}\|u\|_{L^{\infty}(0,T;D_{A_{q}}^{1-\frac{1}{p},p})}+\|\partial_{t}u\|_{L^{p}(0,T;L^{q})}+\|u\|_{L^{p}(0,T;W^{2,q})}+\|p\|_{L^{p}(0,T;W^{1,q})}\\
 & \leq C(\|u_{0}\|_{D_{A_{q}}^{1-\frac{1}{p},p}},\|u_{0}\|_{H^{1}},\|\theta_{0}\|_{L^{q}},T).
\end{align*}

Finally, reasoning similarly as in Lemma \ref{local}, we obtain estimate for the temperature in $L^{\infty}(0,T;W^{s,q})$. This completes the
proof of Lemma \ref{mainlemma}. \end{proof}

With Lemma \ref{local} and \ref{mainlemma} at hand, we are at a position to
prove Theorem \ref{mainthm}. If the initial velocity is
smooth such that $u_{0}\in H^{1}$, then the proof is easy due to the global bound $u\in L^{\infty}(0,T;H^{1})\cap L^{2}(0,T;H^{2})$
for all $T>0$ (see Lemma \ref{mainlemma}). However, if the initial velocity is rough such that $u_{0}\notin H^{1}$, then the global $H^{1}$ bound for the velocity
is absent, which makes it difficult to improve the regularity for the velocity by bootstrap argument. To solve this issue, we shall exploit the continuation argument due to Danchin (cf. Section 7 in \cite{DA}).

\begin{proof}[Proof
of Theorem \ref{mainthm}] We treat two cases $p\geq2$ and
$1<p<2$ differently.

\textbf{(1) The Case of Smooth Data}  $p\geq2$. Combining the imbedding
$D_{A_{q}}^{1-\frac{1}{p},p}\hookrightarrow H^{1}$ (see Lemma \ref{imbedding}) and Lemma \ref{mainlemma} yields the result.

\textbf{(2) The Case of Rough Data} $1<p<2$. First, Lemma \ref{local}
gives us a local smooth solution $(u,p,\theta)$ with the initial
data $(u_{0},\theta_{0})$. Let $T^{*}\in(0,\infty)$ be the existence
time such that $(u,p,\theta)$ belongs to $M_{T^{*}}^{p,q,s}$. Then
we shall prove that $T^{*}$ can be arbitrarily large by adapting
the method of Danchin \cite{DA}.

Since $u\in L^{p}(0,T^{*};W^{2,q})$ and $\theta\in L^{\infty}(0,T^{*};W^{s,q})$,
there exists some $t_{0}\in(0,T^{*})$ such that $u(t_{0})\in W^{2,q}\cap L_{\sigma}^{q}$ and
$\theta(t_{0})\in W^{s,q}$. Noticing that $W^{2,q}\cap L_{\sigma}^{q}\hookrightarrow D_{A_{q}}^{1-\frac{1}{p},p}$
(see Lemma \ref{imbedding}), we obtain $u(t_{0})\in D_{A_{q}}^{1-\frac{1}{p},p}\cap H^{1}$.
Due to Lemma \ref{mainlemma}, we can find a unique global smooth 
solution $(\tilde{u},\tilde{\theta},\tilde{p})$ to system (1.1)-(1.2) with initial
data $(u(t_{0}),\theta(t_{0}))$.

On the other hand, because the smooth solution to system (1.1)-(1.2) is unique,
we get that $(u,p,\theta)\equiv(\tilde{u},\tilde{\theta},\tilde{p})$
on $(t_{0},T^{*})$. Thus, $(\tilde{u},\tilde{\theta},\tilde{p})$
is a global smooth continuation of $(u,p,\theta)$. This completes
the proof of Theorem \ref{mainthm}. \end{proof}
\begin{remark}
The above argument does not give any information about the possible growth of the solution with respect to time. But it does
	work.
	\end{remark}

The proof of Proposition \ref{proposition} is a
direct consequence of Lemma \ref{mainlemma}.
\begin{proof}[Proof
	of Proposition \ref{proposition}]
Choosing $p,q$ such that $\frac{1}{p}+\frac{1}{q}>1$ in Lemma \ref{imbedding}, we have $H^1 \cap L_{\sigma}^{2} \hookrightarrow D_{A_{q}}^{1-\frac{1}{p},p}$. Then, it follows from  Lemma \ref{mainlemma} that
 \[
 u\in L^{\infty}(0,T;H^{1})\cap L^{2}(0,T;H^{2}),\;  \nabla u\in L^1(0,T;L^\infty).
 \]
 Applying Lemma \ref{tran} yields that 
 \[\theta \in L^{\infty}(0,T;H^{1}).\] This completes the proof of Proposition \ref{proposition}.
\end{proof}
\section*{Acknowledgements}

The author is supported in part by the National Natural Science Foundation
of China (No. 11401176) and Doctor Fund of Henan Polytechnic University
(No. B2012-110).

\bibliographystyle{amsplain}

\end{document}